\newtheorem{thm}{Theorem}[section]
\newtheorem{cor}[thm]{Corollary}
\newtheorem{lem}[thm]{Lemma}
\newtheorem{rem}[thm]{\bf{Remark}}
\numberwithin{equation}{section}
\def\pn{\par\noindent}
\begin{document}

\leftline{ \scriptsize \it International Journal of Group Theory  Vol. {\bf\rm XX} No. X {\rm(}201X{\rm)}, pp XX-XX.}

\vspace{1.3 cm}

\title{On Varietal Capability of Infinite Direct Products of Groups}
\author{Hanieh Mirebrahimi$^*$ and Behrooz Mashayekhy}

\thanks{{\scriptsize
\hskip -0.4 true cm MSC(2010): Primary: 20E10; Secondary: , 20K25, 20E34, 20D15, 20F18.
\newline Keywords: Capable group, Direct product, Variety of
groups, $\mathcal{V}-$capable group, direct limit.\\
Received: 30 April 2009, Accepted: 21 June 2010.\\
$*$Corresponding author
\newline\indent{\scriptsize $\copyright$ 2011 University of Isfahan}}}

\maketitle

\begin{center}
Communicated by\;
\end{center}

\begin{abstract}  Recently, the authors gave some conditions under which a direct product
of finitely many groups is $\mathcal{V}-$capable if and only if each of its
factors is $\mathcal{V}-$capable for some varieties $\mathcal{V}$. In this paper, we extend this fact to any infinite direct product of groups. Moreover, we conclude some results for $\mathcal{V}-$capability of direct products of infinitely many groups in varieties of abelian, nilpotent and polynilpotent groups.
\end{abstract}

\vskip 0.2 true cm


\pagestyle{myheadings}
\markboth{\rightline {\scriptsize  Mirebrahimi and Mashayekhy}}
         {\leftline{\scriptsize On Varietal Capability of Infinite Direct Products}}

\bigskip
\bigskip


\section{\bf Introduction}
\vskip 0.4 true cm

R. Baer \cite{B} initiated an investigation of the question "which
conditions a group $G$ must fulfill in order to be the group of
inner automorphisms of a group $E$?", that is $G\cong E/Z(E)$.
Following M. Hall and J. K. Senior \cite{HS}, such a group $G$ is
called \textit{capable}. Baer \cite{B} determined all capable groups which are
direct sums of cyclic groups. As P. Hall \cite{H} mentioned,
characterizations of capable groups are important in classifying
groups of prime-power order.

F. R. Beyl, U. Felgner and P. Schmid \cite{BFS} proved that every group
$G$ possesses a uniquely determined central subgroup $Z^*(G)$
which is minimal subject to being the image in $G$ of the center
of some central extension of $G$. This $Z^*(G)$ is characteristic
in $G$ and is the image of the center of every stem cover of $G$.
Moreover, $Z^*(G)$ is the smallest central subgroup of $G$  whose
factor group is capable \cite{BFS}. 
Hence $G$ is capable if and only if
$Z^*(G)=1$. 
They showed that the class of all capable groups is
closed under the direct products. 
Also, they presented a condition
in which the capability of a direct product of finitely many of
groups implies the capability of each of the factors. 
Moreover,
they proved that if $N$ is a central subgroup of $G$, then
$N\subseteq Z^*(G)$ if and only if the mapping $M(G)\rightarrow
M(G/N)$ induced by the natural epimorphism, is monomorphism. 

Then M. R. R. Moghadam and S. Kayvanfar \cite{MK} generalized the
concept of capability to $\mathcal{V}-$capability for a group
$G$. They introduced the subgroup $(V^*)^*(G)$ which is
associated with the variety $\mathcal{V}$ defined by a set of
laws $V$ and a group $G$ in order to establish a necessary and
sufficient condition under which $G$ can be
$\mathcal{V}-$capable. 
They also showed that the class of all
$\mathcal{V}-$capable groups is closed under the direct products. 
Moreover, they exhibited a close relationship between the groups
$\mathcal{V}M(G)$ and $\mathcal{V}M(G/N)$, where $N$ is a normal
subgroup contained in the marginal subgroup of $G$ with respect
to the variety $\mathcal{V}$. Using this relationship, they gave
a necessary and sufficient condition for a group $G$ to be
$\mathcal{V}-$capable. 

The authors \cite{MM} presented some conditions in which
the $\mathcal{V}-$capablity of a direct product of finitely many
groups implies the $\mathcal{V}-$capablity of each of its factors.
In this paper, we extend this fact to direct product of an infinite family of groups. Also, we deduce some new results about the  $\mathcal{V}-$capability of direct product of infinitely many groups, where $\mathcal{V}$ is the variety of abelian, nilpotent, or polynilpotent groups.


\section{\bf {\bf \em{\bf Main Results}}}
\vskip 0.4 true cm

Suppose that $\mathcal{V}$ is a variety of groups defined by the set of
laws $V$. A group $G$ is said to be $\mathcal{V}-$capable if
there exists a group $E$ such that $G\cong E/V^*(E)$, where $V^*(E)$ is the marginal subgroup of $E$, which is defined as follows \cite{K}: $$\{g\in E\ |\ v(x_1, x_2, ..., x_n)=v(x_1, x_2, ..., gx_i, x_{i+1}, ..., x_n)\ $$$$\forall x_1, x_2, ..., x_n\in E,\ \forall i\in \{1, 2, ..., n\}\}.$$ If
$\psi:E\rightarrow G$ is a surjective homomorphism with $ker
\psi\subseteq V^*(E)$, then the intersection of all subgroups of
the form $\psi(V^*(E))$ is denoted by $(V^*)^*(G)$. It is obvious
that $(V^*)^*(G)$ is a characteristic subgroup of $G$ contained
in $V^*(G)$. If $\mathcal{V}$ is the variety of abelian groups,
then the subgroup $(V^*)^*(G)$ is the same as $Z^*(G)$ and in
this case $\mathcal{V}-$capability is equal to capability \cite{MK}. In the following, there are some results which we need them in sequel.

\begin{thm}\label{2.1} \cite{MK} (i) A group $G$ is $\mathcal{V}-$capable if and only if
$(V^*)^*(G)=1$.\\
(ii) If $\{G_i\}_{i\in I}$ is a family of groups, then $(V^*)^*(\prod_{i\in I}^{}G_i)\subseteq \prod_{i\in
I}(V^*)^*(G_i).$
\end{thm}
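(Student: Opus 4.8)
The two parts are logically independent, so I would handle them separately. Throughout, call a surjection $\psi\colon E\to G$ with $\ker\psi\subseteq V^*(E)$ \emph{admissible}, so that $(V^*)^*(G)=\bigcap\psi(V^*(E))$ over all admissible $\psi$. For the forward implication of (i) there is nothing to do: if $G$ is $\mathcal{V}$-capable, choose $E$ with an isomorphism $G\cong E/V^*(E)$ and let $\psi\colon E\to G$ be the quotient map followed by this isomorphism. Then $\ker\psi=V^*(E)$, so $\psi$ is admissible and $\psi(V^*(E))=1$; since $(V^*)^*(G)$ is contained in this particular image, $(V^*)^*(G)=1$.

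The substance of (i) is the converse, and my plan is to show that the intersection defining $(V^*)^*(G)$ is \emph{attained} by a single universal extension. Fix a free presentation $1\to R\to F\xrightarrow{\pi}G\to 1$ and set $E=F/[RV^*F]$, where $[RV^*F]$ is the normal subgroup of $F$ generated by all elements $v(x_1,\dots,rx_i,\dots,x_n)\,v(x_1,\dots,x_i,\dots,x_n)^{-1}$ with $v\in V$, $x_k\in F$, $r\in R$, $1\le i\le n$. A short check shows $[RV^*F]\subseteq R$, so $\pi$ induces an admissible $\psi\colon E\to G$ with $\ker\psi=R/[RV^*F]$; moreover the very definition of $[RV^*F]$ forces $R/[RV^*F]\subseteq V^*(E)$, so $(V^*)^*(G)\subseteq\psi(V^*(E))$. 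I would then establish the reverse: for any admissible $\psi'\colon E'\to G$, use freeness of $F$ to lift $\pi$ to $\theta\colon F\to E'$ with $\psi'\theta=\pi$. Since $\theta(R)\subseteq\ker\psi'\subseteq V^*(E')$, the marginal identity makes every defining relator of $[RV^*F]$ map to $1$, so $\theta$ descends to $\bar\theta\colon E\to E'$ with $\psi'\bar\theta=\psi$; a marginal-subgroup lemma then gives $\bar\theta(V^*(E))\subseteq V^*(E')$, whence $\psi(V^*(E))=\psi'(\bar\theta(V^*(E)))\subseteq\psi'(V^*(E'))$. Thus $\psi(V^*(E))=(V^*)^*(G)$. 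Granting attainment, the converse is immediate: if $(V^*)^*(G)=1$ then $\psi(V^*(E))=1$, i.e. $V^*(E)\subseteq\ker\psi$, and with $\ker\psi\subseteq V^*(E)$ this yields $\ker\psi=V^*(E)$, so $G\cong E/V^*(E)$ is $\mathcal{V}$-capable.

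For (ii) I would argue directly from the definition together with the standard fact that marginal subgroups respect direct products, $V^*(\prod_j A_j)=\prod_j V^*(A_j)$, both inclusions coming from coordinatewise evaluation of the laws. Fix $i\in I$; it suffices to prove $\pi_i\bigl((V^*)^*(\prod_j G_j)\bigr)\subseteq(V^*)^*(G_i)$, where $\pi_i$ is the $i$-th projection, since an element all of whose coordinates lie in the respective $(V^*)^*(G_i)$ lies in the product. Given an arbitrary admissible $\psi_i\colon E_i\to G_i$, form $E=E_i\times\prod_{j\ne i}G_j$ and $\psi=\psi_i\times\prod_{j\ne i}\mathrm{id}_{G_j}\colon E\to\prod_j G_j$. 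Then $\ker\psi=\ker\psi_i\times 1\subseteq V^*(E_i)\times\prod_{j\ne i}V^*(G_j)=V^*(E)$, so $\psi$ is admissible and $(V^*)^*(\prod_j G_j)\subseteq\psi(V^*(E))$. Since $\pi_i\psi=\psi_i p_i$ for the projection $p_i\colon E\to E_i$ and $p_i(V^*(E))=V^*(E_i)$, applying $\pi_i$ gives $\pi_i\bigl((V^*)^*(\prod_j G_j)\bigr)\subseteq\psi_i(V^*(E_i))$. Intersecting over all admissible $\psi_i$ gives the claim for coordinate $i$, and letting $i$ range over $I$ finishes the proof.

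The main obstacle is the attainment step in (i): proving that $E=F/[RV^*F]$ genuinely computes the intersection. The two delicate points there are (a) checking $\theta([RV^*F])=1$, which is exactly where the marginal property "absorbs" the factor $r\in R$ since $\theta(r)\in V^*(E')$; and (b) the inclusion $\bar\theta(V^*(E))\subseteq V^*(E')$ for the possibly non-surjective map $\bar\theta$, which requires the lemma that an element marginal in a subgroup $H=\bar\theta(E)$ stays marginal in $H\cdot K$ whenever $K=\ker\psi'\subseteq V^*(E')$. Once these standard properties of marginal subgroups are in place, (i) follows; part (ii) is then routine, resting only on the product formula for $V^*$.
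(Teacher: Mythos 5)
Your proof is correct, and since the paper states Theorem \ref{2.1} without proof (it is quoted from \cite{MK}), the right comparison is with that source: your argument is essentially the standard one there, proving part (i) by showing the intersection defining $(V^*)^*(G)$ is attained by the universal extension $E=F/[RV^*F]$ (the marginal analogue of the Beyl--Felgner--Schmid argument for $Z^*$, with the two delicate points --- $\theta([RV^*F])=1$ and $\bar\theta(V^*(E))\subseteq V^*(E')$ for non-surjective $\bar\theta$ via $E'=\bar\theta(E)\cdot\ker\psi'$ with $\ker\psi'\subseteq V^*(E')$ --- correctly identified and handled), and part (ii) by the coordinatewise product formula $V^*(\prod_j A_j)=\prod_j V^*(A_j)$ together with the admissible extensions $E_i\times\prod_{j\neq i}G_j$. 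No gaps; nothing further is needed.
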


As a consequence, if the $G_i$'s are $\mathcal{V}-$capable groups,
then $G=\prod_{i\in I}^{}G_i$ is also $\mathcal{V}-$capable. In the above theorem, the equality does not hold in
general (see Example $2.3$ of \cite{MM}).

\begin{thm}\label{2.2} \cite{MK} Let $\mathcal{V}$ be a variety of groups with a set of laws $V$. Let $G$ be a group and $N$ be a normal subgroup with the property $N\subseteq V^*(G)$. Then $N\subseteq (V^*)^*(G)$ if and only if the
homomorphism induced by the natural map
${\mathcal{V}}M(G)\rightarrow {\mathcal{V}}M(G/N)$ is a
monomorphism.
\end{thm}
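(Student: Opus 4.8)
The plan is to fix a free presentation $\pi\colon F\to G$ with kernel $R$, set $S=\pi^{-1}(N)$, and rewrite the whole statement in terms of $F$, $R$ and $S$. Then $R\subseteq S$, $N\cong S/R$ and $F/S\cong G/N$; with the standard description of the Baer invariant we have $\mathcal{V}M(G)=(R\cap V(F))/[RV^{*}F]$ and $\mathcal{V}M(G/N)=(S\cap V(F))/[SV^{*}F]$, where $[RV^{*}F]$ is the subgroup of $F$ generated by the values $v(x_1,\dots,x_ir,\dots,x_n)\,v(x_1,\dots,x_i,\dots,x_n)^{-1}$ for $v\in V$, $r\in R$, $x_j\in F$, $1\le i\le n$ (and similarly for $S$), so $[SV^{*}F]\subseteq V(F)$ automatically. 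The natural map is induced by the inclusion $R\cap V(F)\hookrightarrow S\cap V(F)$, so the first task is to identify its kernel. The hypothesis $N\subseteq V^{*}(G)$ is exactly what forces each generator of $[SV^{*}F]$ to lie in $R$ (project to $G$ and apply the defining laws of $V^{*}(G)$), whence $[SV^{*}F]\subseteq R\cap V(F)$ and the kernel equals $[SV^{*}F]/[RV^{*}F]$. Since $R\subseteq S$ gives $[RV^{*}F]\subseteq[SV^{*}F]$ for free, the map is a monomorphism precisely when $[SV^{*}F]=[RV^{*}F]$, and the theorem reduces to showing this equality is equivalent to $N\subseteq(V^{*})^{*}(G)$.

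For the direction assuming $N\subseteq(V^{*})^{*}(G)$, I would exploit the particular extension $\psi_{0}\colon F/[RV^{*}F]\to G$ induced by $\pi$. A direct check shows $R/[RV^{*}F]$ is marginal in $F/[RV^{*}F]$, so $\psi_{0}$ is admissible in the definition of $(V^{*})^{*}(G)$; hence $(V^{*})^{*}(G)\subseteq\psi_{0}(V^{*}(F/[RV^{*}F]))=T/R$, where $T\supseteq R$ is the preimage in $F$ of $V^{*}(F/[RV^{*}F])$. Thus $N\subseteq(V^{*})^{*}(G)$ yields $S\subseteq T$, i.e.\ every $s\in S$ is marginal modulo $[RV^{*}F]$, which is literally the assertion that the generators of $[SV^{*}F]$ lie in $[RV^{*}F]$; this gives $[SV^{*}F]=[RV^{*}F]$, as required. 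Note that this direction only uses the trivial inclusion of the intersection into one of its members, not any minimality of $\psi_0$.

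For the converse I would use freeness rather than any minimality of a single extension. Suppose $[SV^{*}F]=[RV^{*}F]$ and let $\psi\colon E\to G$ be an arbitrary surjection with $\ker\psi\subseteq V^{*}(E)$; lifting $\pi$ through $\psi$ gives $\theta\colon F\to E$ with $\psi\theta=\pi$ and $\theta(R)\subseteq V^{*}(E)$. Because $\theta(R)$ is marginal, applying $\theta$ to the generators of $[RV^{*}F]$ makes them trivial, so $[RV^{*}F]\subseteq\ker\theta$ and therefore $[SV^{*}F]\subseteq\ker\theta$. Using that $E=\theta(F)\ker\psi$ and that marginal elements may be deleted from the arguments of $v$, one then verifies $\theta(S)\subseteq V^{*}(E)$, and consequently $N=\psi\theta(S)\subseteq\psi(V^{*}(E))$. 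As $\psi$ was arbitrary, intersecting over all admissible extensions gives $N\subseteq(V^{*})^{*}(G)$.

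The routine parts are the bookkeeping with free presentations and the two formal manipulations of the marginal laws (that $N\subseteq V^{*}(G)$ pushes $[SV^{*}F]$ into $R$, and that membership in $V^{*}$ is equivalent to the relevant $v$-expressions collapsing). The real content, and the step I expect to be most delicate, is the lifting argument in the converse: one must check $\theta(s)\in V^{*}(E)$ for every $s\in S$ by testing the laws $v$ against \emph{all} tuples from $E$, and this is where the equality $[SV^{*}F]=[RV^{*}F]$ must be combined with the reduction of test tuples to elements of $\theta(F)$ (via $E=\theta(F)\ker\psi$ and the marginality of $\ker\psi$).
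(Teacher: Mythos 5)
Your proposal is correct, and it is essentially the standard argument: the paper itself gives no proof of Theorem \ref{2.2} (it is quoted from Moghaddam--Keyvanfar \cite{MK}), and your reduction to the equality $[SV^{*}F]=[RV^{*}F]$ via a free presentation, together with the admissible extension $F/[RV^{*}F]\to G$ in one direction and the lifting of $\pi$ through an arbitrary admissible $\psi$ (using $E=\theta(F)\ker\psi$ and normality of $V^{*}(E)$ to delete marginal factors) in the other, is precisely the proof given in that reference. All the delicate steps you flagged check out, including the identification of the kernel as $[SV^{*}F]/[RV^{*}F]$ once $N\subseteq V^{*}(G)$ forces $[SV^{*}F]\subseteq R\cap V(F)$.
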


We recall that the Baer-invariant of a group $G$, with the free presentation $F/R$, with respect to the
variety ${\mathcal  V}$, denoted by ${\mathcal V}M(G)$, is
$${\mathcal V}M(G)=\frac{R\cap V(F)}{[R V^*F]}\ ,$$
where $V(F)$ is the verbal subgroup of $F$ with respect to $\mathcal V$ and
$$[RV^*F]=<v(f_1,\ldots ,f_{i-1},f_ir,f_{i+1},\ldots,f_n)v(f_1,\ldots,f_i,
\ldots,f_n)^{-1} | r\in R, $$ $$f_i\in F,v\in V,1\leq i\leq n,n\in {\bf N}>\ .$$
It is known that the Baer-invariant of a group $G$ is always abelian and
independent of the choice of the presentation of $G$.  Also if $\mathcal V$ is the variety of abelian groups, then the
Baer-invariant of $G$ will be
${R\cap F'}/{[R,F]}\cong M(G)$,
where $M(G)$ is the Schur multiplier of $G$ (see \cite{K}).

\begin{thm}\label{2.3} \cite{MM} Let $\mathcal{V}$ be a variety, $A$ and $B$ be two
groups with ${\mathcal{V}}M({A\times B})\cong
{\mathcal{V}}M({A})\times{\mathcal{V}}M({B})$, then
$(V^*)^*(A\times B)=(V^*)^*(A)\times (V^*)^*(B)$. Consequently,
$A\times B$ is $\mathcal{V}$-capable if and only if $A$ and $B$
are both $\mathcal{V}$-capable.
\end{thm}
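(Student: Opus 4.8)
The plan is to prove the displayed equality of subgroups by a double inclusion and then read off the capability statement from Theorem~\ref{2.1}(i). One inclusion is free: taking $I=\{1,2\}$ with $G_1=A$, $G_2=B$ in Theorem~\ref{2.1}(ii) gives $(V^*)^*(A\times B)\subseteq (V^*)^*(A)\times (V^*)^*(B)$, so the entire content lies in the reverse inclusion. Write $N=(V^*)^*(A)\times (V^*)^*(B)$. Since each $(V^*)^*$ sits inside the corresponding marginal subgroup and marginal elements of the factors are marginal in the product (a word evaluated in $A\times B$ is computed componentwise, so multiplying one argument by $(a,b)$ with $a\in V^*(A)$ and $b\in V^*(B)$ changes nothing), we get $N\subseteq V^*(A)\times V^*(B)\subseteq V^*(A\times B)$. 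Hence Theorem~\ref{2.2} applies to the pair $(A\times B,N)$, and it suffices to prove that the homomorphism $\theta:{\mathcal V}M(A\times B)\to {\mathcal V}M((A\times B)/N)$ induced by the natural epimorphism is a monomorphism.

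Next I would identify the target and organise the relevant functoriality. Put $A_0=A/(V^*)^*(A)$ and $B_0=B/(V^*)^*(B)$, so that $(A\times B)/N\cong A_0\times B_0$ and $\theta$ is the map induced by $q=q_A\times q_B$, where $q_A:A\to A_0$ and $q_B:B\to B_0$ are the quotient maps. The Baer invariant ${\mathcal V}M(-)$ is a functor on groups, so the inclusions $A,B\hookrightarrow A\times B$ assemble into a natural map $\alpha:{\mathcal V}M(A)\times {\mathcal V}M(B)\to {\mathcal V}M(A\times B)$, and likewise into $\alpha_0:{\mathcal V}M(A_0)\times {\mathcal V}M(B_0)\to {\mathcal V}M(A_0\times B_0)$. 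The key structural fact, and the main technical point of the argument, is that such an inclusion-induced map is always a \emph{split} monomorphism: since the projection $\pi_A:A\times B\to A$ satisfies $\pi_A\iota_A=\mathrm{id}_A$ while $\pi_A\iota_B$ is trivial, functoriality gives $\big((\pi_A)_*,(\pi_B)_*\big)\circ\alpha=\mathrm{id}$, and the same holds for $\alpha_0$. In particular $\alpha_0$ is injective with \emph{no} hypothesis on $A_0,B_0$, which is exactly what is needed since those quotients need not satisfy the Künneth-type assumption of the theorem.

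Finally I would combine these. Because $q=q_A\times q_B$ commutes with the factor inclusions, functoriality yields the commuting relation $\theta\circ\alpha=\alpha_0\circ\big((q_A)_*\times (q_B)_*\big)$. Here $(q_A)_*$ and $(q_B)_*$ are monomorphisms: applying Theorem~\ref{2.2} to $A$ with the subgroup $(V^*)^*(A)$ (which trivially satisfies its hypotheses) shows ${\mathcal V}M(A)\to {\mathcal V}M(A_0)$ is injective, and similarly for $B$, so $(q_A)_*\times (q_B)_*$ is injective. Reading the hypothesis ${\mathcal V}M(A\times B)\cong {\mathcal V}M(A)\times {\mathcal V}M(B)$ as the statement that the natural map $\alpha$ is an isomorphism (the canonical interpretation, since $\alpha$ is automatically a split monomorphism), the right-hand side $\alpha_0\circ\big((q_A)_*\times (q_B)_*\big)$ is injective, hence so is $\theta\circ\alpha$; as $\alpha$ is bijective, $\theta$ itself is a monomorphism. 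Theorem~\ref{2.2} then delivers $N\subseteq (V^*)^*(A\times B)$, giving the reverse inclusion and the desired equality. The consequence is then immediate from Theorem~\ref{2.1}(i): $(V^*)^*(A\times B)=(V^*)^*(A)\times (V^*)^*(B)$ is trivial precisely when both factors are trivial, i.e. exactly when $A$ and $B$ are both $\mathcal V$-capable. The only delicate points to watch are the correct reading of the hypothesis isomorphism as the natural map and the split-injectivity of $\alpha_0$; everything else is formal functoriality.
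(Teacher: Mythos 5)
Your argument is correct, and since the paper states Theorem~\ref{2.3} without proof (it is quoted from \cite{MM}), there is no in-paper proof to compare it against line by line; what can be said is that your route is exactly the one the paper's machinery presupposes, and it is the finite-product analogue of the chain of monomorphisms the authors run just before Theorem~\ref{thm 3.2}. Specifically: one inclusion is Theorem~\ref{2.1}(ii); your verification that $N=(V^*)^*(A)\times(V^*)^*(B)\subseteq V^*(A\times B)$ by componentwise evaluation of words is sound, so Theorem~\ref{2.2} does reduce everything to injectivity of $\theta$; the splitting of $\alpha$ and $\alpha_0$ by the projection-induced maps is the standard fact (it is how ${\mathcal{V}}M(A)\times{\mathcal{V}}M(B)$ is shown to be a direct factor of ${\mathcal{V}}M(A\times B)$ in \cite{M2}); and injectivity of $(q_A)_*\times(q_B)_*$ is correctly extracted from the ``only if'' direction of Theorem~\ref{2.2} applied to each factor with $N=(V^*)^*(A)$, resp.\ $(V^*)^*(B)$. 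The one point worth underlining is the caveat you yourself flag: as literally stated the hypothesis is an abstract isomorphism ${\mathcal{V}}M(A\times B)\cong{\mathcal{V}}M(A)\times{\mathcal{V}}M(B)$, whereas your proof needs the natural inclusion-induced map $\alpha$ to be surjective, and split injectivity together with an abstract isomorphism does not force surjectivity in general, since an infinite abelian group can be isomorphic to a proper direct summand of itself (e.g.\ $\bigoplus_{\mathbb{N}}\mathbb{Z}$). Your reading of the hypothesis as ``$\alpha$ is an isomorphism'' is the intended one --- it is the form in which the hypothesis is actually verified in the remark following Theorem~\ref{thm 3.2}, where the natural complement $T$ vanishes under the coprime-exponent condition --- so with that standard reading your proof is complete and faithful to the source.
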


\begin{thm}\label{2.4} \cite{M1}Let $\{G_i; \phi_i^j,I\}$ be a directed system of groups. Then, for a given variety ${\mathcal{V}}$, the Baer-invariant preserves direct limit, that is ${\mathcal{V}}M(\displaystyle{\lim_{\rightarrow} G_i})=\displaystyle{\lim_{\rightarrow}{\mathcal{V}}M(G_i)}$.
\end{thm}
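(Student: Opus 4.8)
The plan is to reduce the statement to the interchange of the defining operations of the Baer-invariant with direct limits. First I would build a directed system of \emph{free} presentations lying over $\{G_i;\phi_i^j,I\}$. For each $i$ let $F_i$ be the free group on the underlying set of $G_i$, with canonical epimorphism $\pi_i\colon F_i\twoheadrightarrow G_i$ and kernel $R_i$. The set map underlying each $\phi_i^j\colon G_i\to G_j$ induces a homomorphism $F_i\to F_j$, and because the $\phi_i^j$ are compatible as set maps these induced maps form a directed system $\{F_i\}$ satisfying $\pi_j\circ(F_i\to F_j)=\phi_i^j\circ\pi_i$. Hence $\{R_i\}$ is a directed system as well, and the short exact sequences $1\to R_i\to F_i\to G_i\to 1$ are compatible over $I$.

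Next I would pass to the direct limit. Since directed colimits are exact in the category of groups, applying $\lim_{\rightarrow}$ to these sequences yields an exact sequence $1\to R\to F\to G\to 1$, where $F=\lim_{\rightarrow}F_i$, $R=\lim_{\rightarrow}R_i$ and $G=\lim_{\rightarrow}G_i$; thus $F/R$ is a presentation of $G$. The technical heart is then to check that every ingredient of the Baer-invariant formula commutes with $\lim_{\rightarrow}$. The verbal subgroup is generated by word values, a finitary operation, so $V(F)=\lim_{\rightarrow}V(F_i)$; for the same reason the relative subgroup satisfies $[RV^*F]=\lim_{\rightarrow}[R_iV^*F_i]$. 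Because filtered colimits commute with finite limits in groups, intersection is preserved, giving $R\cap V(F)=\lim_{\rightarrow}(R_i\cap V(F_i))$, and since quotients are themselves colimits they commute with $\lim_{\rightarrow}$ too. As each $G_i=F_i/R_i$ is a genuine free presentation, combining these identifications gives $\lim_{\rightarrow}\mathcal{V}M(G_i)=\lim_{\rightarrow}\frac{R_i\cap V(F_i)}{[R_iV^*F_i]}\cong\frac{R\cap V(F)}{[RV^*F]}$.

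It remains to recognise the right-hand side as $\mathcal{V}M(G)$, and this is where I expect the main obstacle. The formula $\frac{R\cap V(F)}{[RV^*F]}$ is known to compute the Baer-invariant only for a \emph{free} presentation, whereas $F=\lim_{\rightarrow}F_i$ is merely a directed colimit of free groups and need not be free. To bridge this gap I would fix a genuine free presentation $P/S\cong G$ and use the projectivity of $P$ to lift the identity of $G$ to a homomorphism $\sigma\colon P\to F$ with $\sigma(S)\subseteq R$, inducing $\sigma_*\colon\mathcal{V}M(G)=\frac{S\cap V(P)}{[SV^*P]}\to\frac{R\cap V(F)}{[RV^*F]}$. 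The task is to prove $\sigma_*$ is an isomorphism; I would do this by adapting the classical argument that the Baer-invariant is independent of the chosen free presentation, controlling the comparison maps at each finite stage $F_i$ (which \emph{are} free) and using that the resulting differences are absorbed by the denominators $[RV^*F]$ and $[SV^*P]$. Equivalently, one establishes once and for all the lemma that $\frac{R\cap V(F)}{[RV^*F]}$ yields $\mathcal{V}M(F/R)$ whenever the top group of the presentation is a directed colimit of free groups. Granting this, the isomorphism above is precisely the canonical map $\lim_{\rightarrow}\mathcal{V}M(G_i)\to\mathcal{V}M(\lim_{\rightarrow}G_i)$ supplied by the functoriality of the Baer-invariant, which completes the proof.
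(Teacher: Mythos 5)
Your first two paragraphs are sound and follow essentially the expected route (note that the paper itself offers no proof of this theorem, quoting it from [M1]; the argument there likewise rests on compatible free presentations and on the fact that verbal subgroups, the subgroups $[RV^*F]$, intersections and quotients all commute with directed limits). The genuine problem is your third paragraph: the obstacle you flag is illusory, but the patch you offer in its place is a real gap. The lemma you invoke --- that $\frac{R\cap V(F)}{[RV^*F]}$ computes $\mathcal{V}M(F/R)$ whenever the top group $F$ is a directed colimit of free groups --- is asserted, not proved, and it does not follow by ``adapting the classical independence argument'': that argument lifts maps in \emph{both} directions between the two presentations and therefore needs both top groups to be projective (free), which is exactly what you concede $F$ might fail to be. The natural proof of your lemma would run: $\mathcal{V}M(F)=\lim_{\rightarrow}\mathcal{V}M(F_i)=1$ since Baer-invariants of free groups are trivial, and then a varietal five-term exact sequence identifies $\frac{R\cap V(F)}{[RV^*F]}$ with the cokernel of $\mathcal{V}M(F)\rightarrow\mathcal{V}M(G)$ --- but the first step is an instance of the very theorem being proved, so as sketched the repair is circular, and the second step requires machinery you never put in place.

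Fortunately the detour is unnecessary, because your own choice of presentations already makes $F$ free. You took $F_i$ to be the free group on the underlying set of $G_i$, i.e.\ $F_i=F(U(G_i))$ where $F\colon\mathbf{Set}\rightarrow\mathbf{Grp}$ is the free-group functor and $U$ the forgetful functor. Since $F$ is a left adjoint it preserves all colimits, and $U$ preserves directed colimits; hence
$$\lim_{\rightarrow}F_i=\lim_{\rightarrow}F(U(G_i))\cong F\bigl(\lim_{\rightarrow}U(G_i)\bigr)\cong F\bigl(U(\lim_{\rightarrow}G_i)\bigr),$$
so $\lim_{\rightarrow}F_i$ is canonically the free group on the underlying set of $G=\lim_{\rightarrow}G_i$, and by the exactness of directed colimits that you already used, $R=\lim_{\rightarrow}R_i$ is precisely the kernel of the canonical epimorphism onto $G$. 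Thus $F/R$ is a genuine free presentation of $G$, the defining formula for the Baer-invariant applies verbatim, and your second paragraph finishes the proof; the resulting isomorphism is the canonical map supplied by functoriality of $\mathcal{V}M$, as you note. In short: delete the third paragraph and replace it with this one-line adjointness observation.
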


\begin{lem}\label{3.1}
For any family of groups $\{G_i\}_{i\in I}$, consider the directed system $\{\mathcal{G}_{I_{\lambda}}, \phi^{\lambda'}_{\lambda}, \Lambda\}$ consisting of all finite direct products $\mathcal{G}_{I_{\lambda}}=\prod_{i\in I_{\lambda}} G_i$ ($I_{\lambda}$ is a finite subset of $I$), with the natural embedding homomorphisms $\phi^{\lambda'}_{\lambda}:\mathcal{G}_{I_{\lambda}}\rightarrow \mathcal{G}_{I_{\lambda'}}$ ($I_{\lambda}\subseteq I_{\lambda'}$). Also, the index set $\Lambda$ is ordered in a directed way so that for any $\lambda, \lambda'\in \Lambda$, $\lambda\leq\lambda'$ if and only if $I_{\lambda}\subseteq I_{\lambda'}$. Then the direct product $\mathcal{G}_{I}=\prod_{i\in I} G_i$ is a direct limit of this directed system.
\end{lem}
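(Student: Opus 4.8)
The plan is to verify that $\mathcal{G}_I$, together with the family of natural embeddings, satisfies the universal property characterising the direct limit $\lim_{\rightarrow}\mathcal{G}_{I_\lambda}$. First I would confirm that $\Lambda$ (the finite subsets of $I$ ordered by inclusion) is genuinely directed: for finite $I_\lambda,I_{\lambda'}\subseteq I$ the set $I_\lambda\cup I_{\lambda'}$ is again finite and serves as an upper bound, and the embeddings compose correctly, $\phi^{\lambda''}_{\lambda'}\circ\phi^{\lambda'}_\lambda=\phi^{\lambda''}_\lambda$ whenever $I_\lambda\subseteq I_{\lambda'}\subseteq I_{\lambda''}$, so $\{\mathcal{G}_{I_\lambda},\phi^{\lambda'}_\lambda,\Lambda\}$ is indeed a directed system. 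For each finite $I_\lambda$ I would then write down the canonical monomorphism $\mu_\lambda:\mathcal{G}_{I_\lambda}\to\mathcal{G}_I$ that extends a tuple by the identity (equal to the given coordinates on $I_\lambda$ and trivial off $I_\lambda$). Since $\mu_{\lambda'}\circ\phi^{\lambda'}_\lambda=\mu_\lambda$ for $I_\lambda\subseteq I_{\lambda'}$, the pair $(\mathcal{G}_I,\{\mu_\lambda\}_{\lambda\in\Lambda})$ is a cocone over the system.

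To obtain the universal property, I would take an arbitrary group $H$ with a compatible family of homomorphisms $f_\lambda:\mathcal{G}_{I_\lambda}\to H$ (that is, $f_{\lambda'}\circ\phi^{\lambda'}_\lambda=f_\lambda$) and construct the unique $f:\mathcal{G}_I\to H$ satisfying $f\circ\mu_\lambda=f_\lambda$ for all $\lambda$. The definition is forced: each $x\in\mathcal{G}_I$ lies in the image $\mu_\lambda(\mathcal{G}_{I_\lambda})$ for the finite index set $I_\lambda$ given by its support, and one is compelled to set $f(x):=f_\lambda(\mu_\lambda^{-1}(x))$. Well-definedness (independence of the chosen $\lambda$) and the homomorphism property both reduce to the compatibility of the $f_\lambda$ once any two elements are transported into a common $\mathcal{G}_{I_{\lambda''}}$ using directedness, while uniqueness is immediate because the images $\mu_\lambda(\mathcal{G}_{I_\lambda})$ together exhaust $\mathcal{G}_I$. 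This exhibits $\mathcal{G}_I$ as $\lim_{\rightarrow}\mathcal{G}_{I_\lambda}$.

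The step I expect to be the crux is the assertion that every $x\in\mathcal{G}_I$ already lies in some $\mu_\lambda(\mathcal{G}_{I_\lambda})$: this is where the finiteness of the support of an element of $\mathcal{G}_I$ is invoked, and it is exactly what forces the directed union $\bigcup_{\lambda\in\Lambda}\mu_\lambda(\mathcal{G}_{I_\lambda})$ to be all of $\mathcal{G}_I$ rather than a proper subgroup. Equivalently, one may bypass the universal property and argue directly: the concrete direct limit of a directed system of embeddings is the directed union of the images, and under the identification above this union coincides with $\mathcal{G}_I$. I would make sure this coincidence is stated explicitly, since it is the only place in the argument carrying genuine content; the remaining verifications are the routine cocone and compatibility checks.
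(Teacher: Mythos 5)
Your proposal is correct and is essentially the paper's own proof: both arguments turn on exactly the point you flag as the crux, namely that every element of $\mathcal{G}_I$ has finite support and hence lies in some $\mu_\lambda(\mathcal{G}_{I_\lambda})$ (so both you and the paper are implicitly reading $\prod_{i\in I}G_i$ as the restricted direct product, since this support claim fails for the full Cartesian product). The only difference is packaging: the paper takes an abstract direct limit $\mathcal{G}$ with maps $\phi_\lambda$ and constructs mutually inverse homomorphisms $\phi:\mathcal{G}\rightarrow\mathcal{G}_I$ and $\tau:\mathcal{G}_I\rightarrow\mathcal{G}$ (defining $\tau$ on an element via its finite support, just as you define $f$), whereas you verify the universal property of the cocone $(\mathcal{G}_I,\{\mu_\lambda\})$ directly --- the same computation in a different arrangement.
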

\begin{proof}
Let $\mathcal{G}=\displaystyle{\lim_{\rightarrow}\mathcal{G}_{I_{\lambda}}}$ be a direct limit of this directed system, with homomorphisms $\phi_{\lambda}:\mathcal{G}_{I_{\lambda}}\rightarrow \mathcal{G}$. Also, for any $\lambda\in \Lambda$, consider the embedding homomorphism $\tau_{\lambda}:\mathcal{G}_{I_{\lambda}}\rightarrow \mathcal{G}_I$. Clearly, for any  $\lambda, \lambda'\in \Lambda$ with $\lambda\leq\lambda'$, $\tau_{\lambda'}\tau^{\lambda'}_{\lambda}=\tau_{\lambda}$. Now, by universal property of $\mathcal{G}$, there exists a unique homomorphism $\phi:\mathcal{G}\rightarrow\mathcal{G}_I$ such that for any $\lambda\in\Lambda$, $\phi\phi_{\lambda}=\tau_{\lambda}$. To define the inverse homomorphism $\tau:\mathcal{G}_I\rightarrow\mathcal{G}$, recall that for any $x=\{x_i\}_{i\in I}\in \mathcal{G}_I$, there exists a finite subset $I_{\lambda}$ of $I$ that for any $i\in I\backslash I_{\lambda}$, $x_i$ is trivial in $G_i$. Hence we can consider $x$ as an element of $\mathcal{G}_{I_{\lambda}}$ and define $\tau(x)=\phi_{\lambda}(x)$. It is easy to see that for any  $\lambda\in\Lambda$, $\tau\tau_{\lambda}=\phi_{\lambda}$. Finally, we see that for any $x\in \mathcal{G}_I$, $\phi\tau(x)=\phi\phi_{\lambda}(x)$, for some ${\lambda}\in \Lambda$; and so $\phi\tau(x)=\tau_{\lambda}(x)=x$. Conversely, the equation $\tau\phi=id_{\mathcal{G}}$ holds because of the universal property of the direct limit $\mathcal{G}$.
\end{proof}

By the above notations, we conclude that $\prod_{i\in I} G_i$, $\prod_{i\in I} V^{**}(G_i)$, and $\prod_{i\in I} G_i/V^{**}(G_i)$ are direct limits of directed systems $\{\prod_{i\in I_{\lambda}} G_i, \phi^{\lambda'}_{\lambda}, \Lambda\}$, $\{\prod_{i\in I_{\lambda}} V^{**}(G_i), \bar{\phi}^{\lambda'}_{\lambda}, \Lambda\}$, and $\{\prod_{i\in I_{\lambda}} G_i/V^{**}(G_i), \psi^{\lambda'}_{\lambda}, \Lambda\}$ respectively, where $\bar{\phi}^{\lambda'}_{\lambda}$'s are restrictions of $\phi^{\lambda'}_{\lambda}$'s and $\psi^{\lambda'}_{\lambda}$'s are quotient homomorphisms induced by $\phi^{\lambda'}_{\lambda}$'s.\\

Now, suppose that $\{G_i\}_{i\in I}$ is a family of groups in which for any $G_i$ and $G_j$ $(i,j\in I)$, ${\mathcal{V}}M({G_i\times G_j})\cong{\mathcal{V}}M({G_i})\times{\mathcal{V}}M({G_j})$. By Theorem \ref{2.3},  $\prod_{i\in I_{\lambda}}(V^*)^*(G_i)\subseteq(V^*)^*(\prod_{i\in I_{\lambda}} G_i)$, for any finite subset $I_{\lambda}$ of $I$. Thus, using Theorem \ref{2.2}, we have the following monomorphism $${\mathcal{V}}M(\prod_{i\in I_{\lambda}} G_i)\hookrightarrow {\mathcal{V}}M(\frac{\prod_{i\in I_{\lambda}} G_i}{\prod_{i\in I_{\lambda}}(V^*)^*(G_i)}).$$ By the fact that direct limit of a directed system preserves exactness of a sequence \cite{M1}, we obtain the following monomorphism
$$\displaystyle{\lim_{\rightarrow}{\mathcal{V}}M(\prod_{i\in I_{\lambda}} G_i)}\hookrightarrow \displaystyle{\lim_{\rightarrow}}{\mathcal{V}}M(\frac{\prod_{i\in I_{\lambda}} G_i}{\prod_{i\in I_{\lambda}}(V^*)^*(G_i)}).$$ Using Theorem \ref{2.4}, we conclude the monomorphism $${\mathcal{V}}M(\displaystyle{\lim_{\rightarrow}\prod_{i\in I_{\lambda}} G_i})\hookrightarrow {\mathcal{V}}M(\displaystyle{\lim_{\rightarrow}\frac{\prod_{i\in I_{\lambda}} G_i}{\prod_{i\in I_{\lambda}}(V^*)^*(G_i)})},$$ and so we have the monomorphism $${\mathcal{V}}M(\prod_{i\in I} G_i)\hookrightarrow {\mathcal{V}}M(\frac{\prod_{i\in I} G_i}{\prod_{i\in I}(V^*)^*(G_i)}).$$ Finally, by Theorem \ref{2.2}, we conclude that $${\prod_{i\in I}(V^*)^*(G_i)}\subseteq {(V^*)^*(\prod_{i\in I}G_i)}.$$
Using these notes, we deduce the following theorem.

\begin{thm}\label{thm 3.2} Let $\mathcal{V}$ be a variety, $\{G_i\}_{i\in I}$ be a family of
groups such that for any $i,j\in I$, ${\mathcal{V}}M({G_i\times G_j})\cong
{\mathcal{V}}M({G_i})\times{\mathcal{V}}M({G_j})$. Then $(V^*)^*(\prod_{i\in I}G_i)=\prod_{i\in I}(V^*)^*(G_i)$.  Consequently, $\prod_{i\in I}G_i$ is $\mathcal{V}$-capable if and only if each $G_i$ is $\mathcal{V}$-capable.
\end{thm}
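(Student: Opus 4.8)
The statement bundles a subgroup equality with a capability criterion, and the plan is to prove the equality and then read off capability formally. First I would observe that one inclusion costs nothing: Theorem \ref{2.1}(ii) gives $(V^*)^*(\prod_{i\in I}G_i)\subseteq\prod_{i\in I}(V^*)^*(G_i)$ for every family, with no hypothesis whatsoever. Hence the entire force of the assumption ${\mathcal{V}}M(G_i\times G_j)\cong{\mathcal{V}}M(G_i)\times{\mathcal{V}}M(G_j)$ is spent on the reverse inclusion $\prod_{i\in I}(V^*)^*(G_i)\subseteq(V^*)^*(\prod_{i\in I}G_i)$, and this is the only thing left to establish.

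For the reverse inclusion I would reduce to finite subproducts and then pass to the limit. Fix a finite $I_\lambda\subseteq I$; the pairwise hypothesis is exactly what lets Theorem \ref{2.3} apply to $\prod_{i\in I_\lambda}G_i$, giving $\prod_{i\in I_\lambda}(V^*)^*(G_i)\subseteq(V^*)^*(\prod_{i\in I_\lambda}G_i)$. Because $(V^*)^*$ always sits inside $V^*$, this finite subgroup lies in $V^*(\prod_{i\in I_\lambda}G_i)$, so Theorem \ref{2.2} reexpresses the inclusion as a monomorphism ${\mathcal{V}}M(\prod_{i\in I_\lambda}G_i)\hookrightarrow{\mathcal{V}}M\bigl((\prod_{i\in I_\lambda}G_i)/\prod_{i\in I_\lambda}(V^*)^*(G_i)\bigr)$, one for each $\lambda\in\Lambda$.

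These monomorphisms constitute a morphism of the three directed systems of Lemma \ref{3.1} (the finite products, their subgroups $\prod_{i\in I_\lambda}(V^*)^*(G_i)$, and the quotients). I would then take the direct limit: since direct limits of groups preserve exactness and hence monomorphisms, and since the Baer invariant commutes with direct limits by Theorem \ref{2.4}, the limit map is again a monomorphism ${\mathcal{V}}M(\prod_{i\in I}G_i)\hookrightarrow{\mathcal{V}}M\bigl((\prod_{i\in I}G_i)/\prod_{i\in I}(V^*)^*(G_i)\bigr)$, Lemma \ref{3.1} identifying each limit with the corresponding object over all of $I$. A second application of Theorem \ref{2.2}, now over the full index set, converts this monomorphism back into $\prod_{i\in I}(V^*)^*(G_i)\subseteq(V^*)^*(\prod_{i\in I}G_i)$; together with the free inclusion this yields the asserted equality. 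The capability equivalence is then immediate from Theorem \ref{2.1}(i): $\prod_{i\in I}G_i$ is $\mathcal{V}$-capable precisely when $(V^*)^*(\prod_{i\in I}G_i)=1$, which by the equality holds precisely when $\prod_{i\in I}(V^*)^*(G_i)=1$, i.e.\ when $(V^*)^*(G_i)=1$ for every $i$ (a direct product being trivial exactly when all its factors are), i.e.\ when each $G_i$ is $\mathcal{V}$-capable.

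I expect two points to demand the most care. The first is structural: the reduction to finite subproducts is legitimate only because $\prod_{i\in I}G_i$ here is the restricted direct product, which Lemma \ref{3.1} realizes as the direct limit of the finite products; this is also what guarantees that the subgroups and quotients assemble into compatible directed systems with the transition maps of that lemma. The second, and sharper, point is the passage of Theorem \ref{2.2}'s monomorphism criterion through the limit---one must check that $\prod_{i\in I_\lambda}(V^*)^*(G_i)\subseteq V^*(\prod_{i\in I_\lambda}G_i)$ at every finite stage so that the criterion applies, and that the relevant sequences stay exact in the limit. Finally, I would want to be sure that Theorem \ref{2.3}, stated for two factors, really does deliver the finite-stage inclusion for $\prod_{i\in I_\lambda}G_i$ under only the pairwise hypothesis; in concrete varieties this follows from the decomposition of the Baer invariant of a direct product into the factors' invariants and pairwise cross-terms (for the abelian variety the cross-terms are the tensor products $G_i^{ab}\otimes G_j^{ab}$, which the pairwise hypothesis forces to vanish), and it is this additivity that makes the whole finite-to-infinite strategy run.
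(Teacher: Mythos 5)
Your proposal is correct and follows the paper's argument essentially step for step: the paper likewise takes the inclusion $(V^*)^*(\prod_{i\in I}G_i)\subseteq\prod_{i\in I}(V^*)^*(G_i)$ for free from Theorem \ref{2.1}(ii), derives the finite-stage reverse inclusion from Theorem \ref{2.3}, converts it via Theorem \ref{2.2} into a monomorphism of Baer invariants, passes to the limit using Lemma \ref{3.1}, exactness of direct limits, and Theorem \ref{2.4}, applies Theorem \ref{2.2} once more over the full index set, and reads off the capability equivalence from Theorem \ref{2.1}(i). The two delicate points you flag --- that Lemma \ref{3.1} really identifies the limit with the (restricted) direct product, and that the two-factor Theorem \ref{2.3} must be leveraged to finite subproducts from only the pairwise hypothesis --- are handled just as tacitly in the paper, so your treatment is, if anything, more explicit than the original.
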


\begin{rem}
(i) In the above theorem, the sufficient condition $${\mathcal{V}}M({A\times B})\cong
{\mathcal{V}}M({A})\times{\mathcal{V}}M({B})$$ is not necessary (see Example $2.3 (iii)$ of \cite{MM}). Also, this condition is essential and can not be omitted (see Example $2.3 (i),(ii)$  of \cite{MM}).\\
(ii) It is known that for varieties of abelian and nilpotent groups, and for any groups $A$ and $B$, ${\mathcal{V}}M(A\times
B)\cong {\mathcal{V}}M(A)\times {\mathcal{V}}M(B)\times T$, where
$T$ is an abelian group whose elements are tensor products of the elements of $A^{ab}$ and $B^{ab}$ \cite{E}, \cite{M2}. Hence in these known varieties, the isomorphism ${\mathcal{V}}M({A\times B})\cong
{\mathcal{V}}M({A})\times{\mathcal{V}}M({B})$ holds, where both $A^{ab}$ and $B^{ab}$ have finite exponent with  $(exp(A^{ab}),exp(B^{ab}))=1$.
\end{rem}

In the following, using the main theorem and the above remark, we deduce some corollaries which are  generalizations of some results of \cite{MM} (Remark $2.4 (ii)$, Corollary $2.5$ and Example $2.2$).

\begin{cor} Let $\{G_i\}_{i\in I}$ be a family of
groups whose abelianizations have mutually coprime exponents. Then $\prod_{i\in I} G_i$ is capable ($\mathcal{N}_{c}$-capable) if and only if each $G_i$ is capable ($\mathcal{N}_{c}$-capable).
\end{cor}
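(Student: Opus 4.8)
The plan is to obtain both assertions as the two special cases $\mathcal{V}=\mathcal{A}$, the variety of abelian groups (for which, as recalled in the excerpt, $\mathcal{V}$-capability coincides with ordinary capability), and $\mathcal{V}=\mathcal{N}_{c}$, the variety of nilpotent groups of class at most $c$, of Theorem \ref{thm 3.2}. Thus the entire task reduces to verifying, for each of these two varieties, that the family $\{G_i\}_{i\in I}$ satisfies the hypothesis of Theorem \ref{thm 3.2}, namely that ${\mathcal{V}}M(G_i\times G_j)\cong{\mathcal{V}}M(G_i)\times{\mathcal{V}}M(G_j)$ for all distinct $i,j\in I$. Once this is established, Theorem \ref{thm 3.2} at once gives $(V^*)^*(\prod_{i\in I}G_i)=\prod_{i\in I}(V^*)^*(G_i)$ and hence the stated equivalence in both cases.

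To check the hypothesis I would invoke part (ii) of the remark preceding this corollary. Since the abelianizations have mutually coprime exponents, each $\exp(G_i^{ab})$ is a well-defined positive integer and $(\exp(G_i^{ab}),\exp(G_j^{ab}))=1$ whenever $i\neq j$. For the varieties of abelian and of nilpotent groups that remark measures the difference between ${\mathcal{V}}M(G_i\times G_j)$ and ${\mathcal{V}}M(G_i)\times{\mathcal{V}}M(G_j)$ by an abelian group $T$ whose elements are tensor products of elements of $G_i^{ab}$ and $G_j^{ab}$. Coprimality of the exponents forces $T$ to be trivial: for $a\in G_i^{ab}$ and $b\in G_j^{ab}$ one has $\exp(G_i^{ab})\cdot(a\otimes b)=0=\exp(G_j^{ab})\cdot(a\otimes b)$, and as the exponents are coprime this yields $a\otimes b=0$. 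Hence $T=1$ and the required isomorphism ${\mathcal{V}}M(G_i\times G_j)\cong{\mathcal{V}}M(G_i)\times{\mathcal{V}}M(G_j)$ holds for both $\mathcal{V}=\mathcal{A}$ and $\mathcal{V}=\mathcal{N}_{c}$.

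The one point I expect to need care is that the direct-limit argument underlying Theorem \ref{thm 3.2} uses not merely the two-factor splitting but also the splitting of a finite sub-product against a new factor. Pairwise coprimality of the exponents is exactly what makes this harmless: for any finite $I_\lambda\subseteq I$ one has $\exp\big((\prod_{i\in I_\lambda}G_i)^{ab}\big)=\prod_{i\in I_\lambda}\exp(G_i^{ab})$, since the least common multiple of pairwise coprime integers equals their product, and this is coprime to $\exp(G_j^{ab})$ for every $j\notin I_\lambda$. The same tensor-vanishing argument then gives ${\mathcal{V}}M\big((\prod_{i\in I_\lambda}G_i)\times G_j\big)\cong{\mathcal{V}}M(\prod_{i\in I_\lambda}G_i)\times{\mathcal{V}}M(G_j)$ at every stage, so the hypothesis propagates through the finite sub-products and Theorem \ref{thm 3.2} applies verbatim. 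Specializing to $\mathcal{V}=\mathcal{A}$ gives the capability statement and to $\mathcal{V}=\mathcal{N}_{c}$ the $\mathcal{N}_{c}$-capability statement, which completes the proof.
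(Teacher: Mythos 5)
Your proof is correct and matches the paper's intended argument: the corollary is deduced there exactly as you do, by feeding part (ii) of the preceding remark into Theorem \ref{thm 3.2}, with coprimality of the exponents of the abelianizations forcing the tensor factor $T$ to vanish. Your third paragraph, verifying that the pairwise splitting propagates to the splitting of a finite sub-product against a new factor (which the iterated application of Theorem \ref{2.3} inside the proof of Theorem \ref{thm 3.2} implicitly needs, via $\exp\bigl((\prod_{i\in I_{\lambda}}G_i)^{ab}\bigr)$ being the product of the pairwise coprime exponents), is a careful point the paper leaves tacit rather than a deviation from its route.
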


\begin{cor} Suppose that $\{G_i\}_{i\in I}$ is a family of
groups whose abelianizations have mutually coprime exponents.
If $\prod_{i\in I} G_i$ is nilpotent of class at most $c_1$, then it
is $\mathcal{N}_{c_1,\cdots,c_s}$-capable if and only if every
$G_i$ is $\mathcal{N}_{c_1,\cdots,c_s}$-capable.
\end{cor}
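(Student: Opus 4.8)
The plan is to deduce this corollary from the main theorem, Theorem~\ref{thm 3.2}, applied to the polynilpotent variety $\mathcal{V}=\mathcal{N}_{c_1,\cdots,c_s}$. By that theorem it suffices to verify its single hypothesis, namely that for every pair $i,j\in I$ one has $\mathcal{N}_{c_1,\cdots,c_s}M(G_i\times G_j)\cong \mathcal{N}_{c_1,\cdots,c_s}M(G_i)\times\mathcal{N}_{c_1,\cdots,c_s}M(G_j)$. Once this is known, the equivalence of capabilities is automatic: Theorem~\ref{thm 3.2} gives $(V^*)^*(\prod_{i\in I}G_i)=\prod_{i\in I}(V^*)^*(G_i)$, and by Theorem~\ref{2.1}(i) this product vanishes exactly when each factor $(V^*)^*(G_i)$ does.

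First I would exploit the standing hypothesis that $\mathcal{G}=\prod_{i\in I}G_i$ is nilpotent of class at most $c_1$. This forces every factor $G_i$, and every finite product $G_i\times G_j$, to lie in the variety $\mathcal{N}_{c_1}\subseteq\mathcal{N}_{c_1,\cdots,c_s}$, so that in each such group $\gamma_{c_1+1}$ already vanishes and the iterated verbal subgroup defining $\mathcal{N}_{c_1,\cdots,c_s}$ on a free cover collapses into a term of the lower central series of $\gamma_{c_1+1}$. In addition, since $G_i$ is nilpotent with $G_i^{ab}$ of finite exponent, $G_i$ is a bounded torsion group whose prime divisors are exactly those of $\exp(G_i^{ab})$. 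Hence the coprimality hypothesis $(\exp(G_i^{ab}),\exp(G_j^{ab}))=1$ means that $G_i$ and $G_j$ involve disjoint sets of primes, and in particular every tensor product built from the bounded groups $G_i^{ab}$ and $G_j^{ab}$ is annihilated by two coprime integers and is therefore trivial.

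With this in hand I would appeal to the direct-product description recorded in the preceding Remark: for the relevant nilpotent-type variety the Baer invariant of $G_i\times G_j$ differs from $\mathcal{N}_{c_1,\cdots,c_s}M(G_i)\times\mathcal{N}_{c_1,\cdots,c_s}M(G_j)$ only by a cross term $T$ whose elements are tensor products of elements of $G_i^{ab}$ and $G_j^{ab}$. By the previous paragraph $T=1$, which yields exactly the isomorphism required by Theorem~\ref{thm 3.2}, and the corollary follows.

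I expect the main obstacle to be the justification of this last step, that is, that for groups of class at most $c_1$ the polynilpotent Baer invariant $\mathcal{N}_{c_1,\cdots,c_s}M(-)$ of a direct product still splits with only a tensor-product cross term, since the Remark as stated covers the abelian and nilpotent varieties rather than the polynilpotent one. The nilpotency hypothesis is precisely what removes this gap: restricting attention to $\mathcal{N}_{c_1}$ reduces the computation of $\mathcal{N}_{c_1,\cdots,c_s}M$ to the nilpotent setting governed by the Remark, so that the cross term is again controlled by $G_i^{ab}\otimes G_j^{ab}$ and is killed by the coprime exponents. Once this splitting is secured, the entire passage from finite to infinite products is carried by Theorem~\ref{thm 3.2}, with no further work required.
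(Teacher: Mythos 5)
Your proposal is correct and follows essentially the same route as the paper: specialize the main theorem (Theorem \ref{thm 3.2}) to $\mathcal{V}=\mathcal{N}_{c_1,\cdots,c_s}$, verify the pairwise hypothesis ${\mathcal{V}}M(G_i\times G_j)\cong{\mathcal{V}}M(G_i)\times{\mathcal{V}}M(G_j)$ by observing that the nilpotency assumption places each $G_i\times G_j$ in $\mathcal{N}_{c_1}\subseteq\mathcal{N}_{c_1,\cdots,c_s}$ while the coprime abelianization exponents annihilate the tensor cross term $T$, and then read off capability from $(V^*)^*(\prod_{i\in I}G_i)=\prod_{i\in I}(V^*)^*(G_i)$ via Theorem \ref{2.1}(i). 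The one step you flag as the main obstacle---that the polynilpotent Baer invariant of a direct product of groups of class at most $c_1$ still splits with only a tensor-product cross term---is precisely the ingredient the paper itself does not prove but imports from its finite-product predecessor (Corollary 2.5 of \cite{MM}), so your argument sits at the same level of detail as the paper's own deduction.
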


\begin{cor} If $\{G_i\}_{i\in I}$ is a family of perfect groups, then
$\prod_{i\in I} G_i$ is $\mathcal{V}$-capable if and only if each
$G_i$ is $\mathcal{V}$-capable, where $\mathcal{V}$ may be each
of these three varieties, 1.variety of abelian groups, 2.variety of
nilpotent groups, 3.variety of polynilpotent groups.
\end{cor}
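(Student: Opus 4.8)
The plan is to obtain the corollary as a direct application of Theorem \ref{thm 3.2}. To invoke that theorem for a given variety $\mathcal{V}$, I must verify its hypothesis: that ${\mathcal{V}}M(G_i\times G_j)\cong {\mathcal{V}}M(G_i)\times{\mathcal{V}}M(G_j)$ for every pair $i,j\in I$. The first observation is that perfectness trivializes all abelianizations: since each $G_i=[G_i,G_i]$, we have $G_i^{ab}=1$ for every $i\in I$. This is the only consequence of perfectness I expect to use, and everything will hinge on feeding it into the known decomposition formulas for the Baer invariant of a direct product.

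For the varieties of abelian and of nilpotent groups I would appeal directly to the decomposition recorded in the Remark following Theorem \ref{thm 3.2}, namely ${\mathcal{V}}M(A\times B)\cong{\mathcal{V}}M(A)\times{\mathcal{V}}M(B)\times T$ with $T$ an abelian group built from tensor products of elements of $A^{ab}$ and $B^{ab}$. Taking $A=G_i$ and $B=G_j$ and substituting $G_i^{ab}=G_j^{ab}=1$, the interaction term $T$ collapses to the trivial group, so the required isomorphism holds for these two varieties. For the variety of polynilpotent groups the argument is the same in spirit: the polynilpotent multiplier of a direct product again splits as the product of the two factors' multipliers together with an interaction term that is a function of the abelianizations of the factors (the polynilpotent analogue of the formula in the Remark). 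Perfectness again annihilates that term, giving ${\mathcal{V}}M(G_i\times G_j)\cong{\mathcal{V}}M(G_i)\times{\mathcal{V}}M(G_j)$.

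With the hypothesis of Theorem \ref{thm 3.2} verified in each of the three cases, I would conclude at once that $(V^*)^*(\prod_{i\in I}G_i)=\prod_{i\in I}(V^*)^*(G_i)$, and hence that $\prod_{i\in I}G_i$ is $\mathcal{V}$-capable precisely when every $G_i$ is. I expect the only genuine obstacle to be the polynilpotent case: the Remark explicitly covers only the abelian and nilpotent varieties, so the decomposition of the polynilpotent multiplier of $A\times B$, with its interaction term controlled by $A^{ab}$ and $B^{ab}$, must be supplied separately (it is exactly the ingredient needed to make perfectness do its work). Once that structural formula is in hand, the remaining steps are immediate.
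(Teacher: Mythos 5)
Your proposal follows essentially the same route as the paper: the corollary is stated there as an immediate consequence of Theorem \ref{thm 3.2} together with Remark (ii), since perfectness gives $G_i^{ab}=1$, annihilating the interaction term $T$ in the decomposition ${\mathcal{V}}M(A\times B)\cong{\mathcal{V}}M(A)\times{\mathcal{V}}M(B)\times T$. Your caveat about the polynilpotent case is accurate and worth keeping — the paper's Remark (ii) explicitly covers only the abelian and nilpotent varieties, and for the polynilpotent analogue of that splitting the paper implicitly leans on its citation of \cite{MM} (Example $2.2$), which is precisely the extra structural ingredient you identify as needing to be supplied.
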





\bigskip
\bigskip


{\footnotesize \pn{\bf Hanieh Mirebrahimi}\; \\
{Department of Pure Mathematics, Center of Excellence in Analysis on Algebraic Structures,
Ferdowsi University of Mashhad,
P. O. Box 1159-91775, Mashhad, Iran.}\\
          {\tt Email: h${\_}$mirebrahimi@um.ac.ir}

{\footnotesize \pn{\bf Behrooz Mashayekhy}\; \\
{Department of Pure Mathematics, Center of Excellence in Analysis on Algebraic Structures,
Ferdowsi University of Mashhad,
P. O. Box 1159-91775, Mashhad, Iran.}\\
          {\tt Email: bmashf@um.ac.ir}

\end{document}